\newtheorem{theorem}{Theorem}[section]
\newtheorem{definition}[theorem]{Definition}
\newtheorem{question}[theorem]{Question}
\newtheorem{remark}[theorem]{Remark}
\newtheorem{example}[theorem]{Example}
\newtheorem{non-example}[theorem]{Non-Example}
\newtheorem{problem}[theorem]{Problem}
\def\to{\rightarrow}
\def\ot{\otimes}
\def\f{\mathfrak}
\def\c{\mathcal}
\def\b{\mathbf}
\def\r{\mathrm}
\def\bb{\mathbb}
\date{}
\begin{document}
\title{Random Quantum Maps and their Associated \\Quantum Markov Chains}
\author{Maysam Maysami Sadr\thanks{sadr@iasbs.ac.ir, corresponding author, orcid.org/0000-0003-0747-4180}
\hspace{3mm}\&\hspace{3mm}Monireh Barzegar Ganji}
\affil{Department of Mathematics,\\ Institute for Advanced Studies in Basic Sciences (IASBS),\\ Zanjan, Iran}
\maketitle
\begin{abstract}
The notion of `quantum family of maps' (QFM) has been defined by Piotr So{\l}tan as a noncommutative analogue of
`parameterized family of continuous maps' between locally compact spaces. A QFM between C*-algebras $B,A$,
is given by a pair $(C,\phi)$ where $C$ is a C*-algebra and $\phi:B\rightarrow A\check{\otimes}C$ is a
$*$-morphism. The main goal of this note, is to introduce the notion of `random quantum map' (RQM), which is a noncommutative
analogue of `random continuous map' between compact spaces. We define a RQM between $B,A$, to be given by a triple
$(C,\phi,\nu)$ where $(C,\phi)$ is a QFM and $\nu$ a state (normalized positive linear functional) on $C$.
Our first application of RQMs takes place in theory of completely positive maps (CPM): RQMs give rise canonically to a class of CPMs which we
call implemented CPMs. We consider some partial results about the natural and important problem of characterization of implemented CPMs.
For instance, using Stinespring's Theorem, we show that any CPM from $B$ to $A$ is implemented if $A$ is finite-dimensional.
Our second application of RQMs takes place in theory of quantum stochastic processes: We show that iterations of any RQM with $B=A$,
gives rise to a quantum Markov chain in a sense introduced by Luigi Accardi.

\textbf{MSC 2020.}  47C15, 46L53, 60J99.

\textbf{Keywords.} C*-algebra, random quantum map, completely positive linear map, quantum Markov chain, invariant state.
\end{abstract}
\section{Introduction}
A random continuous map between compact spaces $X,Y$, may be defined as a probability measure on the space
of all continuous maps from $X$ to $Y$ (with some appropriate $\sigma$-algebra). More generally, if we have a continuously parameterized family
$\{\phi(.,z)\}_{z\in Z}$ of continuous maps from $X$ to $Y$, given by a continuous map $\phi:X\times Z\to Y$, then any probability
Borel measure on $Z$ may be regarded as a random continuous map. In the case $X=Y$, iterations of a random map give rise to
a Markov chain with state space $X$. Random maps, their ergodic properties, and the associated Markov chains have been studied deeply by many
authors. For instance, see Kifer's monographs \cite{Kifer1} and \cite{Kifer2}.

The main aim of this note is to introduce the notion of \emph{random quantum map} (RQM) and some of its applications.
This notion is a \emph{noncommutative} analogue of the notion of \emph{random continuous map} between compact spaces.

Piotr So{\l}tan in \cite{Soltan1} has introduced the notion of quantum family of maps (QFM) as a noncommutative analogue of the notion of
\emph{parameterized family of continuous maps} between locally compact spaces. (See also \cite{Sadr1,Sadr3} for its pure-algebraic analogue.)
A QFM from a C*-algebras $B$ to another C*-algebra $A$ (or more precisely, from the underlying noncommutative space of $A$ to that of $B$)
is given by a pair $(C,\phi)$ where $C$ is a C*-algebra, called parameter-algebra of the family, and $\phi:B\to A\check{\ot} C$
is a $*$-morphism. As it is shown in \cite{Soltan1}, one can combine QFMs; in particular, one may consider iterations
of a QFM from a C*-algebra to itself.

In this note, we define a RQM from $B$ to $A$, to be given by a triple $(C,\phi,\nu)$ where $(C,\phi)$ is a QFM from $B$ to $A$, and $\nu$ a
state (normalized positive linear functional) on $C$.
Our first application of RQMs takes place in theory of completely positive maps (CPM).
Any RQM $(C,\phi,\nu)$ as above, canonically gives rise to a completely positive linear map (CPM) $\c{F}:B\to A$ defined by
$b\mapsto(\r{id}_A\ot\nu)\phi(b)$ for every $b\in B$. In this case, we say that $\c{F}$ is \emph{implemented by} $(C,\phi,\nu)$.
Then the natural problem of characterization of implemented CPMs arises. This problem is a noncommutative analogue of the
classical problem of characterization of probability transition kernels (between compact spaces) which are induced by random continuous maps.
(See $\S$\ref{2109180602} for more details.) The classical problem and its variants have been studied
by many authors. For instance, see \cite{BlumenthalCorson2,Jost1,Rebowski1}.
Our second application of RQMs takes place in theory of quantum stochastic processes.
As iterations of random continuous maps give rise to classical Markov chains, we show that iterations of any RQM with $B=A$,
gives rise to a quantum Markov chain (QMC) in a sense introduced by Luigi Accardi \cite{Accardi0}-\cite{AccardiSouissiSoueidy1}.

In $\S$\ref{2109180602}, we review classical notions of \emph{random map} and \emph{Markov chain}.
In $\S$\ref{2109180603}, we introduce the notion of \emph{random quantum map} and consider the problem of characterization of implemented CPM.
Some of our results in this direction are listed in the following: (i) The set of implemented CPMs between any two C*-algebras is convex
(Theorem \ref{2211201043}). (ii) The set of implemented (unit-preserving) CPMs from a finitely generated C*-algebra to a
finite-dimensional C*-algebra is compact in weak operator topology (Theorem \ref{2211211700}).
(iii) Any CPM between finite-dimensional C*-algebras is implemented by a RQM with a finite-dimensional
parameter-algebra (Theorem \ref{2211170600}). (iv) Any CPM into a finite-dimensional C*-algebra is implemented (Theorem \ref{2211171200}).
(Proofs of (iii) and (iv) use Stinespring's Theorem.)
(v) There exists a (probability transition) kernel between compact spaces which is not induced by any random continuous map,
but its associated CPM is implemented (Example \ref{2211171300}).
In $\S$\ref{2109180604}, we consider QMCs associated to RQMs. In $\S$\ref{2109180605}, we consider invariant
states of RQMs and stationary QMCs.

\vspace{4mm}

\textbf{Conventions.}
Throughout, any C*-algebra has unit. \emph{*-homomorphism} is abbreviated to \emph{morphism}. Morphisms preserve units. $\b{M}_n$ denotes the
C*-algebra of $n\times n$ matrixes. For a Hilbert space $H$, the C*-algebra of bounded linear operators on $H$ is denoted by $\c{L}(H)$.
For a C*-algebra $A$ we denote by $\c{S}(A)$ and $A'$, respectively, the state space of $A$ endowed with weak*-topology and the topological
dual of $A$. Spatial tensor product of C*-algebras is denoted by $\check{\ot}$. For a compact space $X$ we denote by $\c{C}(X)$ the C*-algebra
of continuous complex functions on $X$. We have the canonical identifications $\c{C}(X)'=\c{M}(X)$ and $\c{S}(\c{C}(X))=\c{P}(X)$
where $\c{M}(X)$ is the Banach space of regular complex Borel measures on $X$ and $\c{P}(X)\subseteq\c{M}(X)$ the subset of probability measures.

\vspace{5mm}

\textbf{Acknowledgement.} The author would like to express his sincere gratitude to the anonymous referee for very valuable comments
on the early version of this manuscript.
\section{Classical Random Maps and Markov Chains}\label{2109180602}
Let $X,Y$ be compact spaces. A weak*-continuous mapping $\c{K}:X\to\c{P}(Y)$ is called a (probability transition) \emph{kernel} from $X$ to $Y$.
The kernel $\c{K}$ often is identified by the mapping $X\times\c{B}(Y)\to[0,1]$ defined by $(x,B)\mapsto(\c{K}x)(B)$ where $\c{B}(Y)$ denotes
the $\sigma$-algebra of Borel sets in $Y$. A \emph{transition} from $X$ to $Y$, is a weak*-continuous linear mapping $\c{T}:\c{M}(X)\to\c{M}(Y)$
that transforms probability measures to probability measures, and a \emph{Feller-Markov operator} (FMO) from $\c{C}(X)$ to $\c{C}(Y)$,
is a unit-preserving positive linear mapping $\c{F}:\c{C}(Y)\to\c{C}(X)$. It is well-known that FMOs are exactly unit-preserving completely
positive linear mappings between commutative C*-algebras. There are canonical one-to-one correspondences between these three classes of objects:
For a kernel $\c{K}$ the assignment $f\mapsto(x\mapsto\int_Yf\r{d}(\c{K}x))$ defines a FMO; the adjoint of any FMO is a transition; and for any
transition $\c{T}$ the assignment $x\mapsto\c{T}(\delta_x)$ defines a kernel.

Let $\c{C}(X,Y)$ denote the space of all continuous maps from $X$ to $Y$,
endowed with compact-open topology. Classically, a random continuous map from $X$ to $Y$, is defined to be a probability Borel measure
$\nu$ on $\c{C}(X,Y)$. At least in the case that $Y$ is metrizable, we may associate to $\nu$ a canonical kernel $\c{K}_\nu:X\to\c{P}(Y)$ given by
$$\c{K}_\nu(x,B):=\nu\{f\in\c{C}(X,Y):fx\in B\}\hspace{10mm}(x\in X, B\in\c{B}(Y)).$$
The following natural question and its variants have been studied by
many authors. For instance, see \cite{BlumenthalCorson2,Jost1,Rebowski1}.
\begin{question}\label{2109190601}
For which kernels $\c{K}:X\to\c{P}(Y)$ does there exist a probability Borel measures $\nu$ on $\c{C}(X,Y)$
such that $\c{K}=\c{K}_\nu$?\end{question}
Although in this note we only interested in compact spaces, but note that if $X$ is an arbitrary discrete space and
$Y$ is an arbitrary Polish space then for any kernel $\c{K}$ from $X$ to $Y$ we have $\c{K}=\c{K}_\nu$ where $\nu$
denotes the product probability measure
$\nu=\prod_{x\in X}\c{K}(x)$ on $\c{C}(X,Y)=Y^X$. (The existence of $\nu$ follows from the Kolmogorov Extension Theorem.)

In order to define the noncommutative analogue of random maps, we need the following \emph{restriction} and reformulation of the above notion.
(Indeed, the main problem within the above formulation is that, analogue of the space $\c{C}(X,Y)$ for noncommutative spaces
often does not exist or can not be described by a C*-algebra. Note that for ordinary compact spaces $X,Y$,
the space $\c{C}(X,Y)$ often is not locally compact and hence the natural function-algebra
on $\c{C}(X,Y)$ is a pro-C*-algebras \cite{Phillips1}.)
\begin{definition}\label{2211170701}
Let $X,Y$ be compact spaces. A random continuous map from $X$ to $Y$, is defined to be a triple $(Z,\phi,\nu)$ where $Z$ is a compact space
called parameter-space, $\nu$ is a probability Borel measure on $Z$, and
\begin{equation}\label{2109090650}\phi:X\times Z\to Y\end{equation}is a (jointly) continuous map.\end{definition}
In the above definition, we have interpreted $\phi$ as the continuously parameterized family $\{\phi(.,z)\}_{z\in Z}$ of continuous maps
from $X$ to $Y$. The canonical mapping $$\tilde{\phi}:Z\to\c{C}(X,Y)\hspace{10mm}z\mapsto\phi(.,z),$$
is continuous and hence $\tilde{\phi}_*\nu$ is a probability Borel measure on $\c{C}(X,Y)$ with compact support. Thus, $(Z,\phi,\nu)$
defines a unique random continuous map $\tilde{\phi}_*\nu$ in the classical sense.
The kernel, transition, and FMO associated to $(Z,\phi,\nu)$ are defined by
$$\c{K}_{\phi,\nu}:X\to\c{P}(Y),\hspace{5mm}\c{K}_{\phi,\nu}(x,B):=\nu\{z\in Z:\phi(x,z)\in B\},$$
\begin{equation}\label{2109090657}
\c{T}_{\phi,\nu}:\c{M}(X)\to\c{M}(Y),\hspace{10mm}\rho\mapsto\phi_*(\rho\times\nu),\end{equation}
\begin{equation}\label{2109090658}
\c{F}_{\phi,\nu}:\c{C}(Y)\to\c{C}(X),\hspace{10mm}f\mapsto\Big(x\mapsto\int_Zf\phi(x,z)\r{d}\nu(z)\Big).\end{equation}
\begin{definition}\label{2211170702}
Let $X,Y$ be compact spaces and let $(Z,\phi,\nu)$ be a random continuous map from $X$ to $Y$. We say that a kernel $\c{K}:X\to\c{P}(Y)$
is implemented by $(Z,\phi,\nu)$ if $\c{K}=\c{K}_{\phi,\nu}$.\end{definition}
Note that for a kernel $\c{K}$ as above, $\c{K}$ is implemented by $(Z,\phi,\nu)$ iff for its associated FMO $\c{F}$ we have $\c{F}=\c{F}_{\phi,\nu}$.
We will see that in noncommutative framework, the following \emph{restriction} of question \ref{2109190601} has more appropriate
and effective analogue:
\begin{question}\label{2109190633}
Which kernels are implemented by random continuous maps?\end{question}
The easiest answer to this question is as follows:
\begin{theorem}\label{2211170601}
For $X,Y,\c{K}$ as above, if $X$ is a finite space then $\c{K}$ is implemented.\end{theorem}
\begin{proof}We have $\c{C}(X,Y)=Y^X$. $\c{K}$ is implemented by the random continuous map $(Y^X,\phi,\prod_x(\c{K}x))$ where
$\phi:X\times Y^X\to Y$ is given by $(x,f)\mapsto f(x)$.\end{proof}
\begin{non-example}\label{2211170602}
\emph{Let $X$ be the compact interval $[0,1]$ of real line and let $Y$ be the two-point set $\{0,1\}$.
The space $\c{C}(X,Y)$ has only two points, the constant functions.
We have $\c{P}(Y)=\{x^\dag: x\in[0,1]\}$ where $x^\dag$ denotes the probability measure on $Y$ defined by $\{0\}\mapsto x$ and
$\{1\}\mapsto(1-x)$. Consider the kernel $\c{K}$ from $X$ to $Y$ given by $x\mapsto x^\dag$.
Then, it is easily seen that $\c{K}$ can not be implemented by any random continuous map. Indeed, only constant kernels $X\to\c{P}(Y)$
are implemented.}\end{non-example}
For two families $\phi_1:X_1\times Z_1\to X_2$ and $\phi_2:X_2\times Z_2\to X_3$ of continuous maps their composition $\phi_2\diamond\phi_1$
is defined to be the family
\begin{equation}\label{2109110630}
\phi_2\diamond\phi_1:X_1\times(Z_1\times Z_2)\to X_3\hspace{5mm}(x_1,z_1,z_2)\mapsto\phi_2(\phi_1(x_1,z_1),z_2).\end{equation}
of maps from $X_1$ to $X_3$. Then for random maps $(Z_i,\phi_i,\nu_i)$ ($i=1,2$) with $Z_i,\phi_i$ as above
the Chapman-Kolmogorov identity is seen as any of the following three formulas:
\begin{equation*}\c{K}_{\phi_2\diamond\phi_1,\nu_1\times\nu_2}(x_1,B_3)=
\int_{X_2}\c{K}_{\phi_2,\nu_2}(.,B_3)\r{d}\c{K}_{\phi_1,\nu_1}(x_1,.),\end{equation*}
\begin{equation*}\c{T}_{\phi_2\diamond\phi_1,\nu_1\times\nu_2}=\c{T}_{\phi_2,\nu_2}\c{T}_{\phi_1,\nu_1},
\hspace{10mm}\c{F}_{\phi_2\diamond\phi_1,\nu_1\times\nu_2}=\c{F}_{\phi_1,\nu_1}\c{F}_{\phi_2,\nu_2}.\end{equation*}
Now, we consider Markov chains associated to random maps from a compact space $X$ to itself:
Let $\sigma\in\c{P}(X)$ and let for each $n\geq1$, $(Z_n,\phi_n,\nu_n)$ be a random continuous map on $X$. Consider the $X$-valued stochastic process
$(\psi_n)_{n\geq0}$ on the probability space $$(X\times\prod_{n=1}^\infty Z_n,\sigma\times\prod_{n=1}^\infty\nu_n),$$
defined by $\psi_0(\omega)=x$ and $\psi_n(\omega)=\phi_n\diamond\cdots\diamond\phi_1(x,z_1,\ldots,z_n)$ where
$\omega=(x,z_1,\ldots)\in X\times\prod_{n=1}^\infty Z_n$.
This process is a (nonhomogeneous) Markov chain with state space $X$, initial probability $\sigma$, and probability transition kernel
$\c{K}_{\phi_{m}\diamond\cdots\diamond\phi_{n+1},\nu_{n+1}\times\cdots\times\nu_{m}}$ from step $n\geq0$ to step $m>n$.
The path-space formulation of the process is given by the push-forward of the probability measure $\sigma\times\prod_{n=1}^\infty\nu_n$
under the mapping $$X\times\prod_{n=1}^\infty Z_n\to\prod_{n=0}^\infty X_n\hspace{10mm}\omega\mapsto(\psi_0(\omega),\psi_1(\omega),\ldots).$$
The \emph{Markov property} of the process may be formulated as follows: For every $n$, there exists a unit preserving positive
linear operator $\b{E}_n:\c{C}(X)\to\c{C}(X)$ such that
\begin{equation}\label{2109150710}
(\b{E}_nf)\psi_n=\int_{Z_{n+1}}f\psi_{n+1}\r{d}\nu_{n+1}.\end{equation}
Indeed we have $\b{E}_nf(x):=\int f\phi_{n+1}(x,.)\r{d}\nu_{n+1}$. Note that (\ref{2109150710}) implies that for any $f\in\c{C}(X)$
the conditional expectation of $f\psi_{n+1}$ given the sigma-algebra generated by $\psi_0,\ldots,\psi_n$ is equal to the conditional
expectation of $f\psi_{n+1}$ given the sigma-algebra generated by $\psi_n$, and also, the expectations of the random variables $f\psi_{n+1}$ and
$(\b{E}_nf)\psi_n$ (i.e. their integrals with respect to $\sigma\times\prod_{n=1}^\infty\nu_n$) are equal.
\section{Random Quantum Maps}\label{2109180603}
The concept of \emph{quantum family of maps} has been introduced by So{\l}tan \cite{Soltan1} as a noncommutative
version of the notion given by (\ref{2109090650}):
\begin{definition}
Let $A$ and $B$ be C*-algebras. A quantum family of maps (QFM) from $B$ to $A$ (or more precisely, from the
underlying noncommutative space of $A$ to that of $B$) is given by a morphism $\phi:B\to A\check{\ot} C$ between C*-algebras. Here,
the C*-algebra $C$ is called parameter-algebra of the family. If $C$ is commutative we call the family classical.\end{definition}
Note that if the QFM $\phi$ as above, is classical then $\phi$ is completely distinguished by the family
$\{(\r{id}_A\ot\alpha)\phi\}_{\alpha\in\Delta(C)}$ of morphisms (quantum maps) from $B$ to $A$, where $\Delta(C)$ denotes
the Gelfand (character) space of $C$. In the following definition, we introduce one of the main concepts of this note.
This is a noncommutative version of the notion of \emph{random continuous map} given in Definition \ref{2211170701}.
\begin{definition}A random quantum map (RQM) from $B$ to $A$ is a triple $(C,\phi,\nu)$ where $\phi:B\to A\check{\ot} C$ is a QFM
from $B$ to $A$ with the parameter-algebra $C$, and where $\nu\in\c{S}(C)$. If $C$ is commutative then the RQM is called classical.\end{definition}
Note that if the RQM $(C,\phi,\nu)$ is classical, then $\nu$ is identified by a probability Borel measure on $\Delta(C)$; thus $\nu$ gives rise to
a random structure on the family $\{(\r{id}_A\ot\alpha)\phi\}_{\alpha\in\Delta(C)}$ of quantum maps.
In order to keep notations and terminology similar to the
classical case considered in $\S$\ref{2109180602}, we make the following definition.
\begin{definition}Any unit-preserving completely positive linear mapping $\c{F}:B\to A$ is called
a noncommutative Feller-Markov operator (NFMO). A transition $\c{T}:A'\to B'$ is the adjoint of some NFMO.\end{definition}
Note that FMOs are exactly NFMOs between commutative C*-algebras. Also,
any transition $\c{T}:A'\to B'$ is a weak*-continuous linear mapping that transforms $\c{S}(A)$ to $\c{S}(B)$.
We know that any unit-preserving positive linear map $B\to A$ is a NFMO if at least one of the $A$ and $B$ is commutative
\cite[Corollary 3.14 \& Theorem 3.16]{Skoufranis1}. It can be easily seen that the set of all NFMOs from $B$ to $A$, is a norm-closed
convex subset of the Banach space $\c{L}(B,A)$ of all bounded linear operators from $B$ to $A$.

For any RQM $(C,\phi,\nu)$ as above, the associated transition and NFMO are defined by
\begin{equation}\label{2109110655}
\c{T}_{\phi,\nu}:A'\to B'\hspace{10mm}\rho\mapsto(\rho\ot\nu)\phi,\end{equation}
\begin{equation}\label{2109110658}
\c{F}_{\phi,\nu}:B\to A\hspace{10mm}b\mapsto(\r{id}\ot\nu)\phi(b).\end{equation}
Note that $\c{T}_{\phi,\nu}$ is the adjoint of $\c{F}_{\phi,\nu}$. Also, note that (\ref{2109110655}) and (\ref{2109110658}) are the same
(\ref{2109090657}) and (\ref{2109090658}), written for general C*-algebras.

The following is the analogue of Definition \ref{2211170702}, in noncommutative framework.
\begin{definition}
Let $\c{F}:B\to A$ be a NFMO and let $(C,\phi,\nu)$ be a RQM from $B$ to $A$. We say that $\c{F}$ is implemented by $(C,\phi,\nu)$ (or,
$(C,\phi,\nu)$ implements $\c{F}$) if $\c{F}=\c{F}_{\phi,\nu}$. A NFMO is called \emph{implemented} if it is implemented by some RQM.
A NFMO is called classically implemented if it is implemented by a classical RQM.\end{definition}
Although the smaller parameter-algebras are more desirable, but if a NFMO $\c{F}$ is implemented by a RQM with parameter-algebra $C$,
and if $C$ is a C*-subalgebra of a C*-algebra $D$ (with the same unit), then there is a RQM with parameter-algebra $D$, that implements $\c{F}$.
Thus, by GNS Theorem, for every implemented NFMO $\c{F}$ there is a RQM with parameter-algebra $\c{L}(H)$
for some Hilbert space $H$, that implements $\c{F}$.

The following natural question is a noncommutative version of Question \ref{2109190633}:
\begin{question}\label{2109190644}
Which NFMOs between C*-algebras are implemented?\end{question}
Before we give some easy answers to this question, we consider the notion of \emph{composition} (\cite{Soltan1}) for QFMs:
For QFMs $\phi_1:A_2\to A_1\check{\ot} C_1$ and
$\phi_2:A_3\to A_2\check{\ot} C_2$ the dual version of (\ref{2109110630}) is given by
$$\phi_1\diamond\phi_2:A_3\to A_1\check{\ot}(C_1\check{\ot} C_2)\hspace{10mm}a_3\mapsto(\phi_1\ot\r{id})\phi_2(a_3).$$
For RQMs $(C_i,\phi_i,\nu_i)$ ($i=1,2$) with $C_i,\phi_i$ as above, the Chapman-Kolmogorov identity may be expressed by the following
two equivalent identities:
$$\c{F}_{\phi_1\diamond\phi_2,\nu_1\ot\nu_2}=\c{F}_{\phi_1,\nu_1}\c{F}_{\phi_2,\nu_2},\hspace{10mm}
\c{T}_{\phi_1\diamond\phi_2,\nu_1\ot\nu_2}=\c{T}_{\phi_2,\nu_2}\c{T}_{\phi_1,\nu_1}.$$
\begin{theorem}\label{2109210530}
The following statements hold.
\begin{enumerate}
\item[(i)] States of C*-algebras and morphisms between C*-algebras are implemented.
\item[(ii)] Composition, direct sum, and tensor product of implemented NFMOs are implemented.\end{enumerate}\end{theorem}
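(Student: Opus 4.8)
The plan is, in each case, to exhibit an explicit RQM whose associated NFMO is the prescribed map, and then to verify the defining equality $\c{F}_{\phi,\nu}=\c{F}$ by a direct slice-map computation; it suffices to check this on a dense subset (elementary tensors in the tensor-product case), the general case following by linearity and norm-continuity.

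For (i), note that a state $\omega\in\c{S}(B)$ is precisely an NFMO $B\to\m{C}$. I would implement it by $(B,\phi,\omega)$, where $\phi:B\to\m{C}\ot B$ is the morphism $b\mapsto 1\ot b$ under the identification $\m{C}\ot B=B$; then $\c{F}_{\phi,\omega}(b)=(\r{id}\ot\omega)(1\ot b)=\omega(b)$. Dually, a morphism $\pi:B\to A$ is an NFMO, which I would implement by $(\m{C},\pi,\nu)$, where $\nu$ is the unique state of $\m{C}$ and $\pi$ is viewed as a morphism $B\to A\ot\m{C}=A$; then $\c{F}_{\pi,\nu}(b)=(\r{id}\ot\nu)(\pi(b)\ot 1)=\pi(b)$. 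Thus states and morphisms are exactly the NFMOs implemented by RQMs one of whose two tensor legs is trivial.

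For (ii), the composition case is immediate from what precedes: if $\c{F}_1:A_2\to A_1$ and $\c{F}_2:A_3\to A_2$ are implemented by $(C_1,\phi_1,\nu_1)$ and $(C_2,\phi_2,\nu_2)$, then $\nu_1\ot\nu_2$ is a state of $C_1\ot C_2$, so $(C_1\ot C_2,\phi_1\diamond\phi_2,\nu_1\ot\nu_2)$ is an RQM and the Chapman--Kolmogorov identity recorded above gives $\c{F}_{\phi_1\diamond\phi_2,\nu_1\ot\nu_2}=\c{F}_1\c{F}_2$. For the tensor product $\c{F}_1\ot\c{F}_2:B_1\ot B_2\to A_1\ot A_2$ of maps implemented by $(C_i,\phi_i,\nu_i)$, I would take $C:=C_1\ot C_2$, $\nu:=\nu_1\ot\nu_2$, and $\phi:=\Sigma\circ(\phi_1\ot\phi_2)$, where $\Sigma:(A_1\ot C_1)\ot(A_2\ot C_2)\to(A_1\ot A_2)\ot(C_1\ot C_2)$ is the canonical $*$-isomorphism swapping the two inner legs; slicing by $\nu$ and using that $\nu_1\ot\nu_2$ acts on the two legs separately yields $\c{F}_{\phi,\nu}(b_1\ot b_2)=\c{F}_1(b_1)\ot\c{F}_2(b_2)$. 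The direct-sum case is the only one requiring a genuine choice: the naive guess $C=C_1\oplus C_2$ fails, since a state of $C_1\oplus C_2$ is a convex combination of states of the summands and cannot isolate a single block; so I would again set $C:=C_1\ot C_2$, $\nu:=\nu_1\ot\nu_2$, and --- using $(A_1\oplus A_2)\ot C=(A_1\ot C)\oplus(A_2\ot C)$ --- define $\phi:B_1\oplus B_2\to(A_1\oplus A_2)\ot C$ by $\phi(b_1,b_2):=\big((\r{id}_{A_1}\ot j_1)\phi_1(b_1),(\r{id}_{A_2}\ot j_2)\phi_2(b_2)\big)$, with $j_1:C_1\to C$, $c\mapsto c\ot 1$, and $j_2:C_2\to C$, $c\mapsto 1\ot c$. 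Because $\nu\circ j_1=\nu_1$ and $\nu\circ j_2=\nu_2$, slicing gives $\c{F}_{\phi,\nu}(b_1,b_2)=(\c{F}_1(b_1),\c{F}_2(b_2))$.

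The remaining verifications are routine: each $\phi$ produced above is a unital $*$-morphism (direct sums, tensor products and compositions of morphisms are morphisms, and the flip and padding maps $\Sigma,j_1,j_2$ are morphisms), $\nu_1\ot\nu_2$ is a state of the spatial tensor product, and the slice-map identities used are valid for spatial tensor products of arbitrary unital C*-algebras. I therefore do not expect a genuine obstacle; the only step that is not automatic is the decision, in the direct-sum case, to pass to $C_1\ot C_2$ rather than $C_1\oplus C_2$.
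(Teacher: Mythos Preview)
Your proof is correct and matches the paper's own argument in every case: the same RQMs $(A,\r{id},\sigma)$ for states, $(\bb{C},\phi,1)$ for morphisms, $(C_1\ot C_2,\phi_1\diamond\phi_2,\nu_1\ot\nu_2)$ for composition, and the flip/padding constructions over $C_1\ot C_2$ for tensor product and direct sum. Your additional remark explaining why $C_1\oplus C_2$ is the wrong choice for the direct sum is a nice touch that the paper omits.
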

\begin{proof}It is well-known that states and morphisms are NFMOs. Also, composition, direct sum, and (spatial) tensor product of NFMOs, are NFMOs.
(However, these facts for implemented NFMOs may be deduced from the following proof.)

Let $\sigma\in\c{S}(A)$. Then $\sigma$ is implemented by the RQM $(A,\r{id},\sigma)$ where $\r{id}$ is considered
as the morphism $\r{id}:A\to\bb{C}\check{\ot} A$.

Let $\phi:B\to A$ be a morphism. Then $\phi$ is implemented by the RQM $(\bb{C},\phi,1)$ where $\phi$ is considered as the morphism
$\phi:B\to A\check{\ot}\bb{C}$ and where $1$ denotes the unique state of the C*-algebra $\bb{C}$.

Let $\c{F}_1:A_2\to A_1$ and $\c{F}_2:A_3\to A_2$ be NFMOs implemented respectively by $(C_1,\phi_1,\nu_1)$ and $(C_2,\phi_2,\nu_2)$.
Then $\c{F}_1\c{F}_2$ is implemented by $(C_1\check{\ot} C_2,\phi_1\diamond\phi_2,\nu_1\ot\nu_2)$.

For $i=1,2$, let $\c{F}_i:B_i\to A_i$ be a NFMO implemented by $(C_i,\phi_i,\nu_i)$. Then
$$\c{F}_1\oplus\c{F}_2:B_1\oplus B_2\to A_1\oplus A_2\hspace{5mm}\text{and}\hspace{5mm}\c{F}_1\ot\c{F}_2:B_1\check{\ot} B_2\to A_1\check{\ot} A_2$$
are implemented respectively by $(C_1\check{\ot} C_2,\phi,\nu_1\ot\nu_2)$ and $(C_1\check{\ot} C_2,\psi,\nu_1\ot\nu_2)$.
Here, $\phi$ is the composition
$$\xymatrix{B_1\oplus B_2\ar[r]^-{\phi_1\oplus\phi_2}&(A_1\check{\ot} C_1)\oplus(A_2\check{\ot} C_2)\ar[r]&
(A_1\oplus A_2)\check{\ot}(C_1\check{\ot} C_2)}$$
where the second arrow is given by $(a_1\ot c_1,a_2\ot c_2)\mapsto(a_1,0)\ot(c_1\ot1)+(0,a_2)\ot(1\ot c_2)$, and $\psi$
is the composition $$\xymatrix{B_1\check{\ot} B_2\ar[r]^-{\phi_1\ot\phi_2}&
(A_1\check{\ot} C_1)\check{\ot}(A_2\check{\ot} C_2)\ar[r]&(A_1\check{\ot} A_2)\check{\ot}(C_1\check{\ot} C_2)}$$
where the second arrow just flips the components.\end{proof}
\begin{theorem}\label{2211201043}
For any two C*-algebras $A,B$, the set of all implemented NFMOs from $B$ to $A$, is a convex subset of $\c{L}(B,A)$.\end{theorem}
\begin{proof}Let $\{\c{F}_i\}_{i=1}^n$ be a family of implemented NFMOs from $B$ to $A$, and $\{t_i\}_{i=1}^n$ nonnegative reals with
$\sum_it_i=1$. By Theorem \ref{2109210530}, $\oplus_{i=1}^n\c{F}_i$ is implemented by a RQM $(C,\phi,\nu)$. Let $\psi$ denote the composition
$$\xymatrix{B\ar[r]&\oplus_{i=1}^nB\ar[r]^-{\phi}&(\oplus_{i=1}^n A)\check{\ot} C\ar[r]&A\check{\ot}(\bb{C}^n\check{\ot} C)}$$
where the first arrow is given by $b\mapsto(b,\ldots,b)$ and the third arrow is given by
$$(a_1,\ldots,a_n)\ot c\mapsto\sum_{i=1}^na_i\ot(e_i\ot c)$$ where $e_1,\ldots,e_n$ denote the Euclidean basis of $\bb{C}^n$. Let
$\sigma\in\c{S}(\bb{C}^n)$ be defined by $e_i\mapsto t_i$. Then $\sum_{i=1}^nt_i\c{F}_i$ is implemented by $(\bb{C}^n\check{\ot} C,\psi,\sigma\ot\nu)$.
\end{proof}
The following theorem gives a characterization of all classically implemented NFMOs.
\begin{theorem}
A NFMO $\c{F}:B\to A$ is classically implemented iff there exist a compact space $Z$, a probability Borel measure $\sigma$ on $Z$, and
a jointly continuous map $f:Z\times B\to A$ (continuous with respect to the norms of $A$ and $B$), such that for every fixed
$z\in Z$, $f(z,.):B\to A$ is a morphism, and such that $$\c{F}(b):=\int_Zf(z,b)\r{d}\sigma(z),\hspace{10mm}(b\in B).$$ \end{theorem}
\begin{proof} `if' part: $\c{F}$ is implemented by $(\c{C}(Z),\phi,\sigma)$ where $\phi:B\to A\check{\ot}\c{C}(Z)$ is defied by
$b\mapsto(z\mapsto f(z,b))$ through the canonical identification of $A\check{\ot}\c{C}(Z)$ with the C*-algebra $\c{C}(Z,A)$ of
continuous maps from $Z$ to $A$ (\cite[Chapter 8]{EffrosRuan1})

`only if' part: Suppose that $\c{F}$ is implemented by $(C,\phi,\sigma)$ where $C$ is commutative. By Gelfand's Theorem, $C$ is isomorphic
to $\c{C}(\Delta(C))$ where $\Delta(C)$ denotes character space of $C$. We let $Z:=\Delta(C)$ and identify $\sigma$ with a probability Borel
measure on $Z$. Then $f$ is given by $(z,b)\mapsto(\r{id}_A\ot z)\phi(b)$.\end{proof}
It follows easily from a type of Banach-Alaoglu's Theorem that for any two C*-algebras $A,B$, such that $A$ is finite-dimensional,
the set of all morphisms from $B$ to $A$, and the set of all NFMOs from $B$ to $A$, as subsets of $\c{L}(B,A)$,
are compact in strong operator topology.
\begin{theorem}\label{2211171400}
Let $A,B,C$ be C*-algebras such that $A,C$ are finite-dimensional. Then the set of NFMOs from $B$ to $A$, which are implemented by
RQMs with the fixed parameter-algebra $C$, is compact in strong operator topology.\end{theorem}
\begin{proof}Let $(\c{F}_\lambda)_\lambda$ be a net of NFMOs from $B$ to $A$. Suppose that $\c{F}_\lambda$ is implemented by RQM
$(C,\phi_\lambda,\nu_\lambda)$. Suppose that $\c{F}_\lambda$ converges to a NFMO $\c{F}$, in strong operator topology. There are subnets
of $(\phi_\lambda)_\lambda$ and $(\nu_\lambda)_\lambda$, denoted by the same index $\lambda$, such that $\phi_\lambda$ converges to a morphism
$\phi:B\to A\check{\ot}C$, in strong operator topology, and $(\nu_\lambda)_\lambda$ converges to a state $\nu\in\c{S}(C)$,
in weak*-topology or equivalently in functional norm. For every $b\in B$, we have
$$\c{F}(b)=\lim_\lambda\c{F}_\lambda(b)=\lim_\lambda(\r{id}_A\ot\nu_\lambda)\phi_\lambda(b)=(\r{id}_A\ot\nu)\phi(b).$$
Thus, $\c{F}$ is implemented by $(C,\phi,\nu)$. Therefore, we showed that the set of NFMOs from $B$ to $A$ implemented by RQMs with the
parameter-algebra $C$, is strongly-closed in the set of all NFMOs from $B$ to $A$. The proof is complete.\end{proof}
A QFM $\f{m}:B\to A\check{\ot}\f{M}$ is called quantum family of \emph{all} maps from $B$ to $A$
\cite{Soltan1} if for any other QFM $\phi:B\to A\check{\ot} C$ there exists a unique morphism $\tilde{\phi}:\f{M}\to C$ such that
$\phi=(\r{id}\ot\tilde{\phi})\f{m}$. So{\l}tan in \cite{Soltan1} has proved that $\f{m}$ exists if $B$ is finitely generated and $A$ is finite
dimensional. (This may be interpreted as the noncommutative analogue of the fact that $\c{C}(X,Y)$ is a compact space
if $X$ is finite space and $Y$ is compact.)
Suppose that $\f{m}$ exists and $\c{F}:B\to A$ is a NFMO implemented by $(C,\phi,\nu)$. Then it is clear that $\c{F}$ is also implemented
by the RQM $(\f{M},\f{m},\nu\tilde{\phi})$.
\begin{theorem}\label{2211211700}
Let $A,B$ be two C*-algebras such that the quantum family of all maps from $B$ to $A$ exists, $\text{e.g.}$ $B$ is finitely generated and $A$ is
finite-dimensional. Then the set of all implemented NFMOs from $B$ to $A$, is convex and compact in weak operator topology.\end{theorem}
\begin{proof}Suppose that $\f{m}:B\to A\check{\ot}\f{M}$ denotes the quantum family of all maps from $B$ to $A$.
Let the mapping $\Gamma:\c{S}(\f{M})\to\c{L}(B,A)$ be defined by $\nu\mapsto(\r{id}_A\ot\nu)\f{m}$. Then the image of $\Gamma$
is exactly the set of implemented NFMOs from $B$ to $A$. Let $(\nu_\lambda)_\lambda$ be a net in $\c{S}(\f{M})$ that converges to
to $\nu$ in weak*-topology. For every $f\in A'$, the net $(f\ot\nu_\lambda)_\lambda$ in $(A\check{\ot}\f{M})'$ converges to $f\ot\nu$
in weak*-topology. Thus for every $b\in B$, the net $((\r{id}_A\ot\nu_\lambda)\f{m}(b))_\lambda$ in $A$ converges to
$(\r{id}_A\ot\nu)\f{m}(b)$ in weak-topology of $A$. Therefore, $\Gamma$ is continuous with respect to weak*-topology on $\c{S}(\f{M})$ and
weak operator topology on $\c{L}(B,A)$. This shows that the set of implemented NFMOs from $B$ to $A$ is compact in weak operator topology.
The convexity follows from Theorem \ref{2211201043} (and also can be deduced directly).\end{proof}
We need the following version of Stinespring's Theorem (\cite[Theorem 4.1]{Skoufranis1}).
\begin{theorem}\label{2211141516}
(Stinespring) Let $B$ be a C*-algebra and $H$ a Hilbert space. Suppose that
$\c{F}:B\to\c{L}(H)$ is a NFMO. Then there exist a Hilbert space $K$ that
contains $H$ as a Hilbert subspace, and a morphism $\psi:B\to\c{L}(K)$, such that
$$\c{F}(b)=\r{P}^K_H\psi(b)\r{E}^H_K\hspace{10mm}(b\in B)$$
where $\r{P}^K_H$ denotes the orthogonal projection from $K$ onto $H$, and $\r{E}^H_K$ the embedding of $H$ into $K$.
\end{theorem}
As it can be seen from the proof of Stinespring's Theorem \cite[Theorem 4.1]{Skoufranis1}, to construct the desired Hilbert space $K$, firstly,
a specific semi-inner-product is put on the algebraic tensor product vector space $B\odot H$ and then, $K$ is defined to be the Hilbert space
completion of the quotient of $B\odot H$ by the null vectors. Thus, if $B$ and $H$ are finite-dimensional (resp. separable)
then $K$ may be chosen to be finite-dimensional (resp. separable).
\begin{theorem}\label{2211160907}
Let $B$ be a separable C*-algebra and let $H$ be an infinite-dimensional separable Hilbert space. Then any
NFMO $\c{F}:B\to\c{L}(H)$ is implemented by a RQM with parameter-algebra $\b{M}_2$.\end{theorem}
\begin{proof}Let $K,\psi$ be as in Theorem \ref{2211141516} and suppose that $K$ is separable. Consider the Hilbert space $\tilde{K}=K\oplus K$
and let the morphism $\tilde{\psi}:B\to\c{L}(\tilde{K})\cong\b{M}_2(\c{L}(K))\cong\c{L}(K)\check{\ot}\b{M}_2$ be given by
$b\mapsto\psi(b)\ot1$. We have $\c{F}(b)=\r{P}^{\tilde{K}}_H\tilde{\psi}(b)\r{E}^H_{\tilde{K}}$ where $H$ is considered as a subspace of
the first component of $\tilde{K}$. Let $H^\perp$ denote the orthogonal direct-summand of $H$ in $\tilde{K}$. Then $H^\perp$ is an
infinite-dimensional separable Hilbert space. Let $\phi$ denote the composition
$$\xymatrix{B\ar[r]^-{\tilde{\psi}}&\c{L}(\tilde{K})\ar[r]&\c{L}(H\oplus H^\bot)\ar[r]&\c{L}
(H\oplus H)\ar[r]&\r{M}_2(\c{L}(H))\ar[r]&\c{L}(H)\check{\ot}\b{M}_2},$$
where the second, forth, and fifth arrows are obvious isomorphisms, and the third arrow
is the isomorphism induced by an arbitrary identification between $H^\bot$ and the second component of $H\oplus H$. Let
$\sigma\in\c{S}(\b{M}_2)$ be defined by $(m_{ij})_{2\times 2}\mapsto m_{11}$. Then $\c{F}$ is implemented by $(\b{M}_2,\phi,\sigma)$.\end{proof}
\begin{theorem}\label{2211161017}
Let $B$ be a finite-dimensional C*-algebra, and let $H$ be a finite-dimensional Hilbert space. Then any NFMO from $B$ into $\c{L}(H)$
is implemented by a RQM with the parameter-algebra $\b{M}_\ell$ such that $\ell\leq\r{dim}(B)\r{dim}(H)$.
\end{theorem}
\begin{proof}Let $\r{dim}(H)=n$, and let $K,\psi$ be as in Theorem \ref{2211141516} with
$\ell=\r{dim}(K)\leq n\r{dim}(B)$. The proof is a rewriting of the proof of Theorem \ref{2211160907}, but this time we let $\tilde{K}$
be the orthogonal direct sum of $n$ copies of $K$, and to construct $\tilde{\psi}$ and $\phi$ we use the identifications
$\c{L}(\tilde{K})\cong\c{L}(K)\check{\ot}\b{M}_n$ and $\c{L}(\tilde{K})\cong\c{L}(H)\check{\ot}\b{M}_\ell$, respectively.\end{proof}
\begin{theorem}\label{2211170600}
Any NFMO between finite-dimensional C*-algebras is implemented by a RQM with finite-dimensional parameter-algebra.\end{theorem}
\begin{proof} Let $\c{F}:B\to A$ be a NFMO between finite-dimensional C*-algebras. Suppose that $A=\oplus_{i=1}^k\b{M}_{n_i}$. Let
$\r{P}_i:A\to\b{M}_{n_i}$ denote the projection onto the $i$'th component of $A$. Then $\r{P}_i\c{F}$ is a NFMO and hence, by Theorem \ref{2211161017}, is implemented by a RQM with parameter-algebra $\b{M}_{\ell_i}$ for some $\ell_i$. By Theorem \ref{2109210530}(ii), the NFMO
$\oplus_{i=1}^k\r{P}_i\c{F}:\oplus_{i=1}^kB\to A$ is implemented by a RQM with parameter-algebra $\check{\ot}_{i=1}^k\b{M}_{\ell_i}$.
Let $\phi:B\to\oplus_{i=1}^kB$ denote the morphism $b\mapsto(b,\ldots,b)$. We have $\c{F}=[\oplus_{i=1}^k\r{P}_i\c{F}]\phi$.
Thus, $\c{F}$ is implemented by a RQM with parameter-algebra $\b{M}_{\ell_1\cdots\ell_k}$.\end{proof}
The following result is a noncommutative generalization of Theorem \ref{2211170601}.
Its proof is similar to the proofs of Theorems \ref{2211161017} and \ref{2211170600}, and omitted.
\begin{theorem}\label{2211171200}
If $B$ is an arbitrary C*-algebra and $A$ a finite-dimensional C*-algebra, then any NFMO from $B$ to $A$ is implemented.\end{theorem}
The following example shows an interesting phenomena: There are implemented FMOs which are not classically implemented.
\begin{example}\label{2211171300}
\emph{Let $X=[0,1]$, $Y=\{0,1\}$, and $\c{K}$ be as in Non-Example \ref{2211170602}. It can be checked that the FMO $\c{F}$ associated to
the kernel $\c{K}$ is given by
$$\c{F}:\c{C}(Y)\cong\bb{C}^2\to\c{C}([0,1]),\hspace{10mm}(r,s)\mapsto\big[x\mapsto rx+s(1-x)\big],\hspace{5mm}(r,s\in\bb{C},x\in[0,1]).$$
By Non-Example \ref{2211170602}, we know that $\c{F}$ is not classically implemented. But, $\c{F}$ is implemented by a RQM with
parameter-algebra $\b{M}_2$: Using the canonical identification of $\c{C}([0,1])\check{\ot}\b{M}_2$ with the C*-algebra of all continuous
functions from $[0,1]$ to $\b{M}_2$, let the morphism $\phi:\bb{C}^2\to\c{C}([0,1])\check{\ot}\b{M}_2$ be given by
$$[\phi(1,0)](x)=\left(\begin{array}{cc}x&\sqrt{x(1-x)}\\\sqrt{x(1-x)}&1-x \\\end{array}\right),\hspace{10mm}
[\phi(0,1)](x)=\r{I}_2-[\phi(1,0)](x),$$ and let $\sigma\in\c{S}(\b{M}_2)$ be defined by $(m_{ij})_{2\times2}\mapsto m_{11}$.
Then we have $\c{F}=(\r{id}\ot\sigma)\phi$.}\end{example}
We end this section by an important problem.
\begin{problem}Give an example of a non-implemented NFMO. In particular, give an example of a non-implemented FMO.\end{problem}
\section{Quantum Markov Chains}\label{2109180604}
Let $A$ be a C*-algebra. A \emph{quantum stochastic process} on $A$ in the sense defined by Accardi \cite{Accardi1,Accardi2,AccardiFrigerioLewis1}
is given by a triple $(B,\mu,(\psi_n)_{n\geq0})$ where $B$ is a C*-algebra, $\mu\in\c{S}(B)$, and where for each $n$,
$\psi_n:A\to B$ is a morphism.
\begin{definition}\label{2109150712}
Let $A$ be a C*-algebra, $\sigma\in\c{S}(A)$, and $(C_n,\phi_n,\nu_n)_{n\geq1}$
a sequence of RQMs on $A$. Let the C*-algebra $B:=A\check{\ot}\check{\ot}_{n=1}^\infty C_n$ be the direct limit of the direct system
$\{B_n\}_{n\geq0}$ where $B_n:=A\check{\ot} C_1\check{\ot}\cdots\check{\ot} C_n$, and let the state
$\mu:=\sigma\ot\ot_{n=1}^\infty\nu_n$ in $\c{S}(B)$ be the limit of
the sequence $(\mu_n)_n$ where $\mu_n\in\c{S}(B_n)$ is defined by $\mu_n:=\sigma\ot\nu_1\ot\cdots\ot\nu_n$.
Suppose that $\psi_n:A\to B$ denotes the composition $$\xymatrix{A\ar[rr]^{\phi_1\diamond\cdots\diamond\phi_n}&&B_n\ar[r]&B.}$$
The process $(B,\mu,(\psi_n)_{n\geq0})$ is called quantum Markov chain on
$A$ associated to $(C_n,\phi_n,\nu_n)_{n}$ with initial state $\sigma$. For a homogeneous QMC (i.e. the case that
all RQMs are identical) the index $n$ of $(C_n,\phi_n,\nu_n)$ is omitted.\end{definition}
The \emph{Markov property} considered in the following theorem is a dual version of (\ref{2109150710}). This is also of the type of Markov
properties that have been studied by Accardi in a series of papers. See \cite{AccardiSouissiSoueidy1} and references within.
\begin{theorem}
With the notations as in Definition \ref{2109150712}, suppose that $A_n$ denotes the C*-subalgebra $\psi_n(A)\subseteq B$
and $A_{n]}$ denotes the C*-subalgebra of $B$ generated by $A_0,\ldots,A_n$. The QMC $(B,\mu,(\psi_n)_{n\geq0})$ has the following Markov property:
For every $n\geq0$, there is an implemented NFMO $\b{E}_{n}:A_{n+1]}\to A_{n]}$ such that for $\alpha_n\in A_{n]},\alpha_{n+1}\in A_{n+1]}$ we have
$$\b{E}_{n}(\alpha_n\alpha_{n+1})=\alpha_n\b{E}_{n}(\alpha_{n+1}),\hspace{5mm}
\mu(\b{E}_{n}(\alpha_{n+1}))=\mu(\alpha_{n+1}),\hspace{5mm}\b{E}_{n}(A_{n+1})\subseteq A_n.$$\end{theorem}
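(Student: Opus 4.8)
The plan is to realize $\b{E}_n$ concretely as the ``slice map'' that integrates out the last tensor leg $C_{n+1}$ against $\nu_{n+1}$, and then to check that this slice map actually carries the subalgebra $A_{n+1]}$ into $A_{n]}$. Write $\Phi_n:=\phi_1\diamond\cdots\diamond\phi_n:A\to B_n$ (with $\Phi_0=\r{id}_A$), and let $\iota_n:B_n\to B$ and $j_{m,n}:B_m\to B_n$ ($m\le n$, $b\mapsto b\ot1_{C_{m+1}}\ot\cdots\ot1_{C_n}$) denote the canonical \emph{injective} unital morphisms of the direct system, so that $\psi_n=\iota_n\Phi_n$, $\iota_n=\iota_{n+1}j_{n,n+1}$, and $\mu\iota_n=\mu_n$ as in Definition \ref{2109150712}. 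The first step is the recursion $\Phi_{n+1}=(\Phi_n\ot\r{id}_{C_{n+1}})\phi_{n+1}$, which is just associativity of $\diamond$ together with $\phi\diamond\phi'=(\phi\ot\r{id})\phi'$. Feeding in a representation $\phi_{n+1}(a)=\sum_j a_j\ot c_j$ (a norm limit of finite sums; every map used below is bounded, so this is harmless) this gives
$$\psi_{n+1}(a)=\iota_{n+1}\Big(\sum_j\Phi_n(a_j)\ot c_j\Big)=\sum_j\psi_n(a_j)\,\hat c_j,\qquad \hat c_j:=\iota_{n+1}(1_{B_n}\ot c_j),$$
the copy of $c_j$ in the $(n+1)$-st leg. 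Since $1_{B_n}\ot c$ commutes with $B_n\ot1_{C_{n+1}}$ inside $B_{n+1}$ and $\iota_{n+1}$ is a morphism, each $\hat c_j$ commutes with $\iota_n(B_n)$.

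Next I locate $A_{n+1]}$. As $\psi_k=\iota_n j_{k,n}\Phi_k$ for $k\le n$, each $A_k\subseteq\iota_n(B_n)$ for $k\le n$, hence $A_{n]}\subseteq\iota_n(B_n)$; put $\hat A_{n]}:=\iota_n^{-1}(A_{n]})\subseteq B_n$, so $\iota_n(\hat A_{n]})=A_{n]}$. The displayed formula shows $A_{n+1}\subseteq\iota_{n+1}\big(\hat A_{n]}\ot C_{n+1}\big)$, while $A_{n]}=\iota_{n+1}\big(\hat A_{n]}\ot1_{C_{n+1}}\big)$; since the spatial tensor product is injective, $\hat A_{n]}\ot C_{n+1}$ is a C*-subalgebra of $B_{n+1}$, and therefore the C*-algebra $A_{n+1]}$ generated by $A_0,\dots,A_{n+1}$ satisfies $A_{n+1]}\subseteq\iota_{n+1}\big(\hat A_{n]}\ot C_{n+1}\big)$. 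Now define $\b{E}_n$ to be the restriction to $A_{n+1]}$ of
$$S:=\iota_n\circ(\r{id}_{B_n}\ot\nu_{n+1})\circ\iota_{n+1}^{-1}:\ \iota_{n+1}(B_{n+1})\longrightarrow\iota_n(B_n)\subseteq B,$$
a unital completely positive map. On $\iota_{n+1}\big(\hat A_{n]}\ot C_{n+1}\big)$ the map $S$ takes values in $\iota_n(\hat A_{n]})=A_{n]}$, so $\b{E}_n:A_{n+1]}\to A_{n]}$ is a well-defined NFMO ($1_B=\psi_0(1_A)\in A_{n+1]}$ and $S(1_B)=1_B$). It is \emph{implemented}: take $C:=C_{n+1}$, the state $\nu_{n+1}$, and the morphism $\chi:=\big((\iota_n|_{\hat A_{n]}})\ot\r{id}_{C_{n+1}}\big)\circ\iota_{n+1}^{-1}|_{A_{n+1]}}:A_{n+1]}\to A_{n]}\ot C_{n+1}$; a check on elementary tensors gives $(\r{id}_{A_{n]}}\ot\nu_{n+1})\chi=\b{E}_n$, i.e. $\b{E}_n=\c{F}_{\chi,\nu_{n+1}}$.

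It remains to verify the three identities. For the module property, $S$ is an $\iota_n(B_n)$-bimodule map, because $\r{id}_{B_n}\ot\nu_{n+1}$ is a $B_n$-bimodule map and the $\iota$'s are morphisms; since $A_{n]}\subseteq\iota_n(B_n)$ and $A_{n]}\subseteq A_{n+1]}$, for $\alpha_n\in A_{n]}$, $\alpha_{n+1}\in A_{n+1]}$ we get $\b{E}_n(\alpha_n\alpha_{n+1})=S(\alpha_n\alpha_{n+1})=\alpha_n S(\alpha_{n+1})=\alpha_n\b{E}_n(\alpha_{n+1})$. For the invariance, with $\beta:=\iota_{n+1}^{-1}(\alpha_{n+1})\in B_{n+1}$ one has $\mu(\b{E}_n\alpha_{n+1})=\mu_n\big((\r{id}_{B_n}\ot\nu_{n+1})\beta\big)=(\mu_n\ot\nu_{n+1})(\beta)=\mu_{n+1}(\beta)=\mu(\alpha_{n+1})$, using $\mu\iota_n=\mu_n$, $\mu\iota_{n+1}=\mu_{n+1}=\mu_n\ot\nu_{n+1}$, and the elementary identity $\mu_n\circ(\r{id}_{B_n}\ot\nu_{n+1})=\mu_n\ot\nu_{n+1}$. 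Finally, applying $S$ to $\psi_{n+1}(a)=\iota_{n+1}\big((\Phi_n\ot\r{id}_{C_{n+1}})\phi_{n+1}(a)\big)$ and pushing $\nu_{n+1}$ through $\Phi_n$ yields
$$\b{E}_n(\psi_{n+1}(a))=\iota_n\Phi_n\big((\r{id}_A\ot\nu_{n+1})\phi_{n+1}(a)\big)=\psi_n\big(\c{F}_{\phi_{n+1},\nu_{n+1}}(a)\big)\in A_n,$$
so $\b{E}_n(A_{n+1})\subseteq A_n$; in fact $\b{E}_n\psi_{n+1}=\psi_n\,\c{F}_{\phi_{n+1},\nu_{n+1}}$, the dual of (\ref{2109150710}).

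The one genuinely delicate point is the containment $A_{n+1]}\subseteq\iota_{n+1}\big(\hat A_{n]}\ot C_{n+1}\big)$. Because the slice map $S$ is only completely positive and not multiplicative, it does \emph{not} suffice to track how $S$ acts on the generators $A_0,\dots,A_{n+1}$; one must control the whole generated C*-algebra, and this is exactly what the factorization through $\hat A_{n]}\ot C_{n+1}$ provides — which in turn rests on injectivity of the spatial tensor product together with the commutation of the $(n+1)$-st leg with $\iota_n(B_n)$ established in the first step. Once this containment is in hand, everything else is routine slice-map bookkeeping.
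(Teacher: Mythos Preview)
Your proof is correct and follows the same approach as the paper: define $\b{E}_n$ as the restriction to $A_{n+1]}$ of the slice map $\r{id}_{B_n}\ot\nu_{n+1}:B_{n+1}\to B_n$, verify the $B_n$-bimodule property and $\mu$-invariance on all of $B_{n+1}$, and compute $\b{E}_n\psi_{n+1}=\psi_n\c{F}_{\phi_{n+1},\nu_{n+1}}$ to get $\b{E}_n(A_{n+1})\subseteq A_n$. Your treatment of the containment $\b{E}_n(A_{n+1]})\subseteq A_{n]}$ via the intermediate C*-subalgebra $\hat A_{n]}\ot C_{n+1}$ is in fact more careful than the paper's one-line justification, and you also make the implementing RQM $(C_{n+1},\chi,\nu_{n+1})$ explicit where the paper leaves it implicit.
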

\begin{proof}First of all note that $A_{n]}\subseteq B_n$. For every $n$ let $\b{F}_n:B_{n+1}\to B_n$ be defined by
$$\b{F}_n(a\ot c_1\ot\cdots\ot c_n\ot c_{n+1})=\nu_{n+1}(c_{n+1})(a\ot c_1\ot\cdots\ot c_n).$$
Then $\b{F}_n$ is an implemented NFMO. For any $b_{n+1}\in B_{n+1}$ and $b_n\in B_n\subset B_{n+1}$ we have
$\b{F}_n(b_nb_{n+1})=b_n\b{F}_n(b_{n+1})$ and $\mu(b_{n+1})=(\sigma\ot\nu_1\ot\cdots\ot\nu_{n+1})b_{n+1}=
\mu(\b{F}_n(b_{n+1}))$. We define $\b{E}_{n}$ to be the restriction of $\b{F}_n$
to $A_{n+1]}$. Thus, in order to complete the proof, it is enough to show that
$\b{F}_n(A_{n+1})\subseteq A_n$ and  $\b{F}_n(A_{n+1]})\subseteq A_{n]}$.
The first inclusion follows from the following observation: For any $a_{n+1}\in A_{n+1}$ there is $a\in A$ such that $a_{n+1}=\psi_{n+1}(a)$; thus
\begin{equation*}\begin{split}\b{F}_n(a_{n+1})&=\b{F}_n(\psi_{n+1}(a))\\
&=\b{F}_n([(\phi_1\diamond\cdots\diamond\phi_n)\ot\r{id}_{C_{n+1}}]\phi_{n+1}(a))\\
&=[(\phi_1\diamond\cdots\diamond\phi_n)\ot\nu_{n+1}]\phi_{n+1}(a)\\
&=(\phi_1\diamond\cdots\diamond\phi_n)[(\r{id}_A\ot\nu_{n+1})\phi_{n+1}(a)]\\
&=\psi_n(a')\in A_n,\end{split}\end{equation*}
where $A\ni a'=(\r{id}_A\ot\nu_{n+1})\phi_{n+1}(a)$. The second inclusion follows from the first one and the fact that $\b{F}_n$ has no effect
on the component $A\check{\ot} C_1\check{\ot}\cdots\check{\ot} C_n$ of $B_{n+1}$.\end{proof}
Let $\star$ denote \emph{free product} of C*-algebras i.e. coproduct in the category of C*-algebras.
Let $(B,\mu,(\psi_n)_{n\geq0})$ be a quantum process on a C*-algebra $A$. Consider the C*-algebra $\star_{n=0}^\infty A$ and the state
$\hat{\mu}$ on it defined by $\hat{\mu}:=\mu\Psi$ where $\Psi:\star_{n=0}^\infty A\to B$ denotes the limit of the canonical morphisms
$\psi_0\star\cdots\star\psi_n:\star_{i=0}^nA\to B$. Then the pair $(\star_{n=0}^\infty A,\hat{\mu})$ may be called
\emph{path-space description} of the process. Also for any finite ordered sequence $s_1<\ldots<s_k$ of non-negative integers the state
$\hat{\mu}_{s_1,\ldots,s_k}\in\c{S}(\star_{i=1}^k A)$ defined by $$\hat{\mu}_{s_1,\ldots,s_k}:=\mu(\psi_{s_1}\star\cdots\star\psi_{s_k})$$
may be called a \emph{finite-dimensional distribution} for the process. We call the process \emph{semi-commutative} if the state $\hat{\mu}$
factors through the obvious morphism $\hat{\r{id}}:\star_{n=0}^\infty A\to\check{\ot}_{n=0}^\infty A$ i.e. there exists a (necessarily unique) state
$\tilde{\mu}\in\c{S}(\check{\ot}_{n=0}^\infty A)$ such that $\hat{\mu}=\tilde{\mu}\hat{\r{id}}$. It can be checked that if for every $a,a'\in A$
and every $n,n'\geq0$ with $n\neq n'$ the elements $\psi_n(a)$ and $\psi_{n'}(a')$ commute in $B$ then the process is semi-commutative.
The QMCs considered in \cite[$\S$7]{Accardi0} are semi-commutative. Our QMCs in general are not semi-commutative.
\begin{example}\emph{Recall that a \emph{compact quantum semigroup} is a pair $(A,\phi)$ where $A$ is a C*-algebra
and $\phi:A\to A\check{\ot} A$ is a coassociative morphism. Thus any state $\nu\in\c{S}(A)$ gives rise to a RQM $(A,\phi,\nu)$ on $A$.
The associated QMCs are just new decorations for \emph{quantum random walks} studied by some authors. See for instance
\cite{LindsaySkalski1} and \cite{Baraquin1}.}\end{example}
\section{Invariant States and Stationary Processes}\label{2109180605}
In this section we consider the noncommutative analogues of some very well-known results about invariant probability measures
and stationary Markov chains.

Let $(C,\phi,\nu)$ be a RQM on a C*-algebra $A$. A state $\sigma\in\c{S}(A)$ is called \emph{invariant} if
$\c{T}_{\phi,\nu}(\sigma)=\sigma$. We denote the set of invariant states by $\c{I}_{\phi,\nu}(A)$.
\begin{theorem}\emph{$\c{I}_{\phi,\nu}(A)$ is a nonempty closed convex subset of $\c{S}(A)$.}\end{theorem}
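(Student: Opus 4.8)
The plan is to treat the three assertions in increasing order of difficulty, with nonemptiness the only one requiring a genuine idea. Throughout I would use that $\c{T}_{\phi,\nu}$ of (\ref{2109110655}) is the weak*-continuous linear adjoint of the NFMO $\c{F}_{\phi,\nu}$ of (\ref{2109110658}), and that, since $\c{F}_{\phi,\nu}$ is unital and positive, $\c{T}_{\phi,\nu}$ carries $\c{S}(A)$ into $\c{S}(A)$. Hence $\c{T}_{\phi,\nu}|_{\c{S}(A)}$ is a weak*-continuous affine self-map of the nonempty weak*-compact convex set $\c{S}(A)$, and everything that follows is soft functional analysis applied to this one fact.

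For convexity, observe that $\c{I}_{\phi,\nu}(A)$ is by definition the fixed-point set of the affine map $\c{T}_{\phi,\nu}$ intersected with $\c{S}(A)$: if $\c{T}_{\phi,\nu}\sigma_1=\sigma_1$ and $\c{T}_{\phi,\nu}\sigma_2=\sigma_2$, then linearity gives $\c{T}_{\phi,\nu}(t\sigma_1+(1-t)\sigma_2)=t\sigma_1+(1-t)\sigma_2$ for $t\in[0,1]$, so $\c{I}_{\phi,\nu}(A)$ is convex. For closedness, the map $\sigma\mapsto\c{T}_{\phi,\nu}\sigma-\sigma$ is weak*-continuous on $A'$, so its kernel is weak*-closed; intersecting with the weak*-closed set $\c{S}(A)$ shows $\c{I}_{\phi,\nu}(A)$ is weak*-closed, hence (being contained in the weak*-compact $\c{S}(A)$) itself weak*-compact.

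The substantive point is nonemptiness, and I would settle it by the Markov--Kakutani fixed-point theorem: a continuous affine self-map of a nonempty compact convex subset of a locally convex space has a fixed point, and applied to $\c{T}_{\phi,\nu}|_{\c{S}(A)}$ this produces an invariant state. If one prefers a self-contained argument, run the Cesàro-averaging proof underlying Markov--Kakutani: fix any $\sigma_0\in\c{S}(A)$ and put $\sigma_n:=\frac1n\sum_{k=0}^{n-1}\c{T}_{\phi,\nu}^k\sigma_0\in\c{S}(A)$; by weak*-compactness pass to a subnet $\sigma_{n_\alpha}\to\sigma$ weak*. Since $\|\c{T}_{\phi,\nu}^{n_\alpha}\sigma_0-\sigma_0\|\le2$ while $1/n_\alpha\to0$, weak*-continuity and linearity of $\c{T}_{\phi,\nu}$ give $\c{T}_{\phi,\nu}\sigma-\sigma=\lim_\alpha(\c{T}_{\phi,\nu}\sigma_{n_\alpha}-\sigma_{n_\alpha})=\lim_\alpha\frac{1}{n_\alpha}(\c{T}_{\phi,\nu}^{n_\alpha}\sigma_0-\sigma_0)=0$, so $\sigma\in\c{I}_{\phi,\nu}(A)$.

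The only obstacle worth naming is thus nonemptiness, and it is resolved entirely by weak*-compactness of the state space together with affineness of $\c{T}_{\phi,\nu}$; no structure of the RQM beyond ``$\c{F}_{\phi,\nu}$ is a NFMO'' enters. This is precisely the noncommutative shadow of the classical existence of an invariant probability measure for a Feller--Markov operator on a compact space.
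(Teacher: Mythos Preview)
Your proof is correct and matches the paper's approach exactly: the paper invokes Markov--Kakutani for nonemptiness and declares the remaining properties ``easily verified,'' which is precisely what you have spelled out. Your added Ces\`aro-averaging argument is a welcome unpacking of the same fixed-point idea.
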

\begin{proof}The existence of an invariant state follows from Markov-Kakutani's Fixed Point Theorem (\cite[Theorem I.3.3.1]{GranasDugundji1}).
The other properties of $\c{I}_{\phi,\nu}(A)$ are easily verified.\end{proof}
Let $(B,\mu,(\psi_n)_{n\geq0})$ be a quantum process on $A$. We say that the process is \emph{stationary} if its finite-dimensional
distributions are invariant under translations, that is for every ordered sequence $s_1<\ldots<s_k$ of non-negative integers
and any $\ell\geq0$ we have
\begin{equation}\label{2109150803}
\hat{\mu}_{s_1+\ell,\ldots,s_k+\ell}=\hat{\mu}_{s_1,\ldots,s_k}\end{equation}
An equivalent (and effective) definition has been given in \cite{AccardiFrigerioLewis1}: The process is called stationary if for any $r$-tuple
$(t_1,\ldots,t_r)$ of non-negative integers (not necessarily ordered and such that it is possible $t_i=t_j$ for $i\neq j$),
any $r$-tuple $(a_1,\ldots,a_r)$ of elements of $A$, and every positive integer $\ell$ we have
\begin{equation}\label{2109150804}
\mu(\psi_{t_1}(a_1)\cdots\psi_{t_r}(a_r))=\mu(\psi_{t_1+\ell}(a_1)\cdots\psi_{t_r+\ell}(a_r)).
\end{equation}
(Note that (\ref{2109150803}) is satisfied iff (\ref{2109150804}) is satisfied with every $r\geq1$ and $t_i\in\{s_1,\ldots,s_k\}$.)
\begin{theorem}
Let $(C,\phi,\nu)$ be a RQM on $A$ and let $(B,\mu,(\psi_n)_{n\geq0})$ denote the associated homogenous QMC with initial state
$\sigma\in\c{S}(A)$. Then the QMC is stationary iff $\sigma\in\c{I}_{\phi,\nu}(A)$.\end{theorem}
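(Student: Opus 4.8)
The plan is to prove both implications by unwinding the definitions of the associated QMC and of stationarity, reducing everything to statements about the transition $\c{T}_{\phi,\nu}$ and the Chapman--Kolmogorov identity $\c{T}_{\phi\diamond\cdots\diamond\phi,\nu\ot\cdots\ot\nu}=\c{T}_{\phi,\nu}^k$. First I would record the key computation: for any $a\in A$ and $n\geq0$, the functional $b\mapsto\mu(\psi_n(b))$ on $A$ equals $\c{T}_{\phi,\nu}^n(\sigma)$ applied to $b$; indeed $\mu(\psi_n(b))=\mu_n((\phi_1\diamond\cdots\diamond\phi_n)(b))=(\sigma\ot\nu^{\ot n})((\phi\diamond\cdots\diamond\phi)(b))=(\c{T}_{\phi,\nu}^n\sigma)(b)$ by the Chapman--Kolmogorov identity from $\S\ref{2109180603}$. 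In particular, if $\sigma$ is invariant then $\mu(\psi_n(b))=\sigma(b)$ for all $n$; this is the one-point case of stationarity and also the heart of the "if" direction.

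For the "if" direction ($\sigma\in\c{I}_{\phi,\nu}(A)\Rightarrow$ stationary) I would verify $(\ref{2109150804})$ for an arbitrary $r$-tuple $(t_1,\ldots,t_r)$. After reordering the factors (which does not affect the product in $B$ once we track indices, but for the monomial $\psi_{t_1}(a_1)\cdots\psi_{t_r}(a_r)$ the order does matter, so I keep the given order), I would use the structure of $B=A\ot\ot_{n\geq1}C_n$ together with the explicit form of $\psi_n=(\text{inclusion})\circ(\phi_1\diamond\cdots\diamond\phi_n)$ to express $\mu(\psi_{t_1}(a_1)\cdots\psi_{t_r}(a_r))$ as an iterated application of $\c{T}_{\phi,\nu}$ interleaved with multiplications in $A$. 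Concretely, letting $m=\max_i t_i$, the monomial lives in $B_m$, and applying $\b{F}_{m-1},\b{F}_{m-2},\ldots$ (the conditional expectations from the previous theorem, which satisfy $\mu\circ\b{F}_n=\mu$ and collapse the top tensor leg via $\nu$) peels off one $C_n$ at a time; because $\phi$ is the same at every level, shifting all $t_i$ by $\ell$ simply prepends $\ell$ extra identical peeling steps that each act as the identity on the relevant sub-monomial up to the $\sigma$ at the bottom, which is unchanged since $\c{T}_{\phi,\nu}\sigma=\sigma$. This yields $(\ref{2109150804})$.

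For the converse (stationary $\Rightarrow\sigma$ invariant) I would specialize $(\ref{2109150804})$ to $r=1$, $t_1=0$, $\ell=1$: stationarity gives $\mu(\psi_0(a))=\mu(\psi_1(a))$ for all $a\in A$. By the key computation above, $\mu(\psi_0(a))=\sigma(a)$ and $\mu(\psi_1(a))=(\c{T}_{\phi,\nu}\sigma)(a)$, hence $\c{T}_{\phi,\nu}\sigma=\sigma$, i.e. $\sigma\in\c{I}_{\phi,\nu}(A)$.

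The main obstacle I anticipate is the bookkeeping in the "if" direction: one must argue carefully that a length-$\ell$ shift of all time indices really does correspond to prepending $\ell$ copies of a step that is neutralized precisely by invariance, and that the non-commutativity of the $\psi_{t_i}(a_i)$ (the process is in general not semi-commutative) does not obstruct this. I expect the cleanest route is to avoid peeling legs individually and instead observe directly that, after the inclusions $B_m\hookrightarrow B_{m+\ell}$ are accounted for, the shifted monomial $\psi_{t_1+\ell}(a_1)\cdots\psi_{t_r+\ell}(a_r)$ equals $(\underbrace{\phi\diamond\cdots\diamond\phi}_{\ell}\ot\r{id})$ applied to $\psi_{t_1}(a_1)\cdots\psi_{t_r}(a_r)$ viewed in $B_m$ (using that each $\psi_{t_i+\ell}=(\phi^{\diamond\ell}\ot\r{id})\circ\psi_{t_i}$ as morphisms and that these $\ell$ prepended copies are morphisms, hence multiplicative), and then $\mu_{m+\ell}$ of this equals $\mu_m$ of the original exactly because $(\sigma\ot\nu^{\ot\ell})\circ\phi^{\diamond\ell}=\c{T}_{\phi,\nu}^\ell\sigma=\sigma$ by Chapman--Kolmogorov and invariance. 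This turns the whole argument into one application of multiplicativity plus the $r=1$ identity, sidestepping the combinatorics entirely.
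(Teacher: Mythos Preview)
Your proposal is correct and, in its final paragraph, converges on exactly the paper's argument: the paper too observes that $\psi_{t_i+\ell}=(\phi^{\diamond\ell}\ot\r{id})\psi_{t_i}$, uses multiplicativity of the morphism $\phi^{\diamond\ell}\ot\r{id}$ to factor it out of the product, and then applies $(\sigma\ot\nu^{\ot\ell})\phi^{\diamond\ell}=\sigma$ (i.e.\ invariance of $\sigma$ for the iterated RQM) to collapse the $\ell$ extra legs; the converse is handled identically via $r=1$, $t_1=0$, $\ell=1$. Your earlier ``peeling'' sketch with the $\b{F}_n$ would also work but is unnecessary once you notice the one-shot factorization you describe at the end.
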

\begin{proof}Suppose that $\sigma\in\c{I}_{\phi,\nu}(A)$.
With notations as in Definition \ref{2109150712} and  (\ref{2109150804}), assume that $k\geq t_i$ for $i=1,\ldots,r$, and consider
$B_{t_i}\supseteq\phi^{\diamond^{t_i}}(A)$ canonically as a subspace of $B_k=A\check{\ot}\check{\ot}_{i=1}^kC$.
Note that $\sigma$ is also an invariant state for RQM
$(\check{\ot}_{i=1}^\ell C,\phi^{\diamond^{\ell}},\ot_{i=1}^\ell\nu)$ on $A$. We have
\begin{equation*}\begin{split}
&\mu(\psi_{t_1+\ell}(a_1)\cdots\psi_{t_r+\ell}(a_r))\\
=&\Big(\sigma\ot(\ot_{i=1}^\ell\nu)\ot(\ot_{i=1}^k\nu)\Big)\Big(\big[\phi^{\diamond^{\ell}}\ot\r{id}_{\ot_{i=1}^kC}\big]
\big(\phi^{\diamond^{t_1}}(a_1)\cdots\phi^{\diamond^{t_r}}(a_r)\big)\Big)\\
=&\big(\sigma\ot\ot_{i=1}^k\nu\big)\big(\phi^{\diamond^{t_1}}(a_1)\cdots\phi^{\diamond^{t_r}}(a_r)\big)\\
=&\mu(\psi_{t_1}(a_1)\cdots\psi_{t_r}(a_r)).\end{split}\end{equation*}
The `only if' part follows from (\ref{2109150804}) by $t_1=0$ and $r,\ell=1$.\end{proof}
Let $\phi:A\to A\check{\ot} C$ be a QFM on $A$. The \emph{skew product} associated to $\phi$ is a morphism $\phi^\dag$ on the
C*-algebra $B:=A\check{\ot}\check{\ot}_{n=1}^\infty C$, defined by $$\phi^\dag:B\to B,\hspace{10mm}\big(a\ot c_1\ot c_2\ot\cdots\big)
\mapsto\big(\phi(a)\ot c_1\ot c_2\ot\cdots\big).$$
\begin{theorem}\label{2109230500}
Let $(C,\phi,\nu)$ be a RQM on $A$. Then for any state $\sigma$ of $A$ we have $\sigma\in\c{I}_{\phi,\nu}$ iff
$\mu:=\sigma\ot\ot_{n=1}^\infty\nu$ is an invariant state for $\phi^\dag$ i.e. $\mu\phi^\dag=\mu$.\end{theorem}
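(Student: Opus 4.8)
The plan is to verify the identity $\mu\phi^\dag=\mu$ by testing both (bounded) states on the dense $*$-subalgebra $\bigcup_{n\geq0}B_n$ of $B$, where $B_n$ denotes $A\ot C\ot\cdots\ot C$ with $n$ tensor factors $C$, embedded in $B=A\ot\ot_{n=1}^\infty C$ by filling the remaining slots with units. Since $\mu\phi^\dag$ and $\mu$ are norm-continuous, it suffices to compare them on the elementary tensors spanning each $B_n$; for the converse implication the case $n=0$ (testing on $A\subseteq B$) will already be enough.

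First I would unwind $\phi^\dag$ on an elementary tensor. By the definition of the skew product, $\phi^\dag$ applies $\phi$ to the leading $A$-factor and shifts the tensor legs so that the new $C$-leg produced by $\phi$ occupies the first $C$-slot while $c_1,\ldots,c_n$ move to the slots $C_2,\ldots,C_{n+1}$. Writing $\phi(a)$ as (a norm limit of sums) $\sum_k a_k\ot d_k$ with $a_k\in A$, $d_k\in C$, this reads
$$\phi^\dag(a\ot c_1\ot\cdots\ot c_n)=\sum_k a_k\ot d_k\ot c_1\ot\cdots\ot c_n\in B_{n+1}.$$
Applying $\mu=\sigma\ot\ot_{n=1}^\infty\nu$, which on $B_{n+1}$ is the product functional $\sigma\ot\nu\ot\cdots\ot\nu$ and hence multiplicative on elementary tensors, gives
$$\mu\big(\phi^\dag(a\ot c_1\ot\cdots\ot c_n)\big)=\Big(\sum_k\sigma(a_k)\nu(d_k)\Big)\nu(c_1)\cdots\nu(c_n)=\big((\sigma\ot\nu)\phi(a)\big)\,\nu(c_1)\cdots\nu(c_n),$$
whereas $\mu(a\ot c_1\ot\cdots\ot c_n)=\sigma(a)\,\nu(c_1)\cdots\nu(c_n)$. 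By (\ref{2109110655}) we have $(\sigma\ot\nu)\phi=\c{T}_{\phi,\nu}(\sigma)$, so the two evaluations differ precisely by the scalar $\big(\c{T}_{\phi,\nu}(\sigma)(a)-\sigma(a)\big)\,\nu(c_1)\cdots\nu(c_n)$.

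To conclude: if $\sigma\in\c{I}_{\phi,\nu}$, this difference vanishes on every elementary tensor of every $B_n$, so $\mu\phi^\dag$ and $\mu$ agree on $\bigcup_n B_n$ and, by density and continuity, on all of $B$; that is, $\mu\phi^\dag=\mu$. Conversely, if $\mu\phi^\dag=\mu$, then already testing on $B_0=A$ (the case $n=0$, where there are no $c_i$-factors) gives $\c{T}_{\phi,\nu}(\sigma)(a)=\sigma(a)$ for every $a\in A$, i.e. $\sigma\in\c{I}_{\phi,\nu}$. I do not expect a substantive obstacle here; the only delicate points are the careful bookkeeping of the tensor-leg relabeling built into $\phi^\dag$ and the routine density argument identifying two states on an infinite C*-tensor product from their values on finite elementary tensors.
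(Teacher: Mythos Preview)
Your argument is correct and is essentially the paper's own proof written out in detail: the paper compresses your elementary-tensor computation into the single identity $\mu\phi^\dag=(\c{T}_{\phi,\nu}\sigma)\ot\ot_{n=1}^\infty\nu$, from which both directions follow immediately. Your explicit tracking of the shift in tensor legs and the density/continuity step are exactly what is needed to justify that identity, so there is no substantive difference in approach.
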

\begin{proof}It follows from the identity $\mu\phi^\dag=(\c{T}_{\phi,\nu}\sigma)\ot\ot_{n=1}^\infty\nu$.\end{proof}
\begin{remark}
\emph{It follows from Theorem \ref{2109230500} that any RQM $(C,\phi,\nu)$ on $A$ with an invariant state $\sigma$
gives rise to a \emph{C*-dynamical system} $(\phi^\dag,\mu)$. Thus, similar with the classical case (e.g. \cite{Kifer2}),
it is possible to develop an ergodic theory for RQMs through the ergodic theory of C*-dynamical systems.}
\end{remark}

{\footnotesize}
\end{document}